\documentclass[11pt]{article}
\usepackage{amsmath, amsthm}
\usepackage{latexsym}
\usepackage{graphicx}
\usepackage[scanall]{psfrag}
\setlength{\textwidth}{6.3in}
\setlength{\textheight}{8.7in}
\setlength{\topmargin}{0pt}
\setlength{\headsep}{0pt}
\setlength{\headheight}{0pt}
\setlength{\oddsidemargin}{0pt}
\setlength{\evensidemargin}{0pt}

\makeatletter
\newfont{\footsc}{cmcsc10 at 8truept}
\newfont{\footbf}{cmbx10 at 8truept}
\newfont{\footrm}{cmr10 at 10truept}
\makeatother
\pagestyle{plain}

\newtheorem{theorem}{\bf Theorem}
\newtheorem{proposition}{\bf Proposition}
\newtheorem{lemma}{\bf Lemma}
\newtheorem{corollary}{\bf Corollary}

\begin{document}
\title{Prior Ordering and Monotonicity in Dirichlet Bandits}

\author{Yaming Yu\\
\small Department of Statistics\\[-0.8ex]
\small University of California\\[-0.8ex] 
\small Irvine, CA 92697, USA\\[-0.8ex]
\small \texttt{yamingy@uci.edu}}

\date{}
\maketitle

\begin{abstract}
One of two independent stochastic processes (arms) are to be selected at each of $n$ stages.  The selection is sequential and depends on past observations as well as the prior information.  Observations from arm $i$ are independent given a distribution $P_i$, and, following Clayton and Berry (1985), $P_i$'s have independent Dirichlet process priors.  The objective is to maximize the expected future-discounted sum of the $n$ observations.  We study structural properties of the bandit, in particular how the maximum expected payoff and the optimal strategy vary with the Dirichlet process priors.  The main results are (i) for a particular arm and a fixed prior weight, the maximum expected payoff increases as the mean of the Dirichlet process prior becomes larger in the increasing convex order; (ii) for a fixed prior mean, the maximum expected payoff decreases as the prior weight increases.  Specializing to the one-armed bandit, the second result captures the intuition that, given the same immediate payoff, the more is known about an arm, the less desirable it becomes because there is less to learn when selecting that arm.  This extends some results of Gittins and Wang (1992) on Bernoulli bandits and settles a conjecture of Clayton and Berry (1985). 

{\bf Keywords:} convex order; Dirichlet bandits; sequential decision; two-armed bandits.

{\bf MSC 2010:} Primary 62L05, 62C10; Secondary 62L15, 60E15. 
 
\end{abstract}

\section{Introduction}
Bandit problems are classical problems in statistical decision theory and have received considerable attention; see Berry and Fristedt (1985) for an overview.  We consider discrete-time, finite-horizon, two-armed bandits from a Bayesian perspective.  At each of $n$ stages, an observation is taken from one of two stochastic processes (arms).  A {\it strategy} specifies which process to select based on past observations.  The objective is to maximize the expected payoff, $\sum_{i=1}^n a_i Z_i$, where $Z_i$ is the observation at stage $i$ and $A_n\equiv (a_1, a_2,\ldots, a_n)$ is a  discount sequence satisfying $a_i\geq 0$ and $\sum_{i=1}^n a_i >0$.  A strategy is optimal if it achieves the maximum expected payoff.  An arm is optimal initially if there exists an optimal strategy that selects that arm at the first stage. 

The most widely studied bandit problem is the Bernoulli bandit, where each arm generates a sequence of exchangeable Bernoulli random variables.  Bernoulli bandits are important as a model for clinical trials.  Others such as normal bandits have also been extensively studied (Chernoff 1968).  Extending the Bernoulli bandit, Clayton and Berry (1985) have introduced a one-armed Bayesian nonparametric bandit using Dirichlet process priors (Ferguson 1973).  Chattopadhyay (1994) extends this and studies the two armed Dirichlet bandit, which is also the setting of this work.  Associated with arms $1$ and $2$ are probability measures $P_i,\ i=1,2$, respectively.  Observations from arm $i$ are independent samples given $P_i$; observations from different arms are independent.  The $P_i$'s themselves are treated as random, with independent Dirichlet process priors.  Specifically, $P_i\sim \rm{DP}(\alpha_i)$, where $\alpha_i$ is a finite nonnull measure with a finite first moment.  It is often helpful to write $\alpha_i=M_i F_i$ where $M_i=\alpha_i(\mathbf{R})$ so that $F_i$ is a probability distribution.  We refer to $F_i$ and $M_i$ as the prior mean distribution and prior weight of the Dirichlet process, respectively.  We use $(\alpha_1, \alpha_2; A_n)$ to denote such a Dirichlet bandit with discount sequence $A_n$. 

For such problems one must balance the desire to maximize the immediate payoff and the need to explore a less known arm in the hope of higher payoff later on (the exploitation versus exploration dilemma).  Optimal strategies are usually  specified through backward induction and are nontrivial to compute.  Nevertheless certain structural properties such as the stay-on-a-winner rule (Bradt, Johnson and Karlin 1956; Berry 1972) often hold under suitable conditions.  For Dirichlet bandits with known arm 2, Clayton and Berry (1985) obtain several structural results.  In particular, the maximum expected payoff increases as $F_1$, the mean of the Dirichlet process prior for arm 1, increases in the usual stochastic order.  Also, a version of the stay-on-a-winner rule holds: if arm 1 is optimal initially then it is optimal at the next stage provided that the initial observation from arm 1 is sufficiently large.  Such results have been extended to the general two-armed Dirichlet bandits (Chattopadhyay 1994). 

This paper studies further structural properties of Dirichlet bandits, in particular how the value of the bandit (i.e., the maximum expected payoff) varies with the Dirichlet process priors.  The main results are (i) the value increases as the mean of the Dirichlet process for any arm becomes larger in the increasing convex order (defined below); (ii) the value decreases as the prior weight of the Dirichlet process of an arm increases.  The second result agrees with the intuition that, given the same immediate payoff, an arm is less appealing when more is known about it, because there remains less to be explored.  Though easy to state and intuitively appealing, such results are often difficult to prove.  We mention a long-standing conjecture of Berry (1972), which states that for a finite-horizon Bernoulli two-armed bandit with uniform discounting and independent ${\rm Beta}(u_i, v_i)$ priors, $i=1,2,$ for arms 1 and 2 respectively, if $u_1/v_1=u_2/v_2$ and $u_1+v_1<u_2+v_2$, then arm 1 is preferred to arm 2 at the initial pull.  If, instead of finite-horizon uniform discounting, we assume infinite-horizon geometric discounting, then the corresponding conjecture is true, as shown by Gittins and Wang (1992), who also prove analogous results for some other parametric bandits.  Geometric discounting is special in that the optimal strategy for a multi-armed bandit is characterized by a ``dynamic allocation index,'' or Gittins index (Gittins and Jones 1974; Gittins 1979; Whittle 1980), which reduces the problem to several one-armed bandits. 

As the Bernoulli bandit is a special case of the Dirichlet bandit, our results may be regarded as a generalization of Gittins and Wang (1992), although our method of proof, based on convexity and stochastic orders, is different.  
Our main result (Corollary~\ref{coro2}) confirms a conjecture of Clayton and Berry (1985) concerning the break-even value in the one-armed Dirichlet bandit.  We also prove another conjecture of Clayton and Berry (1985) concerning the break-even observation when both arms are optimal initially (Proposition~\ref{prop1}).  These results will hopefully shed some light on the conjecture of Berry (1972).  See Herschkorn (1997) for related results and conjectures on the Bernoulli bandit. 

We find the usual stochastic order, the convex order and the increasing convex order particularly helpful in formulating and deriving the main results.  For random variables $Z_1$ and $Z_2$ taking values on $\mathbf{R}$, we write $Z_1\leq_{\rm st} Z_2$ (respectively, $Z_1\leq_{\rm cx} Z_2$), if 
\begin{equation}
\label{orders}
E\phi(Z_1)\leq E\phi(Z_2)
\end{equation} 
for every increasing (respectively, convex) function $\phi$ such that the expectations exist.  If $Z_1\leq_{\rm st} Z_2$ then we also say $Z_2$ is to the right of $Z_1$.  We say $Z_1$ is smaller than $Z_2$ in the increasing convex order, written as $Z_1\leq_{\rm icx} Z_2,$ if (\ref{orders}) holds for every increasing and convex function $\phi$ such that the expectations exist.  Hence $\leq_{\rm icx}$ is implied by either $\leq_{\rm st}$ or $\leq_{\rm cx}$.  The convex order is concerned with variability.  For example, if $Z_1\leq_{\rm cx} Z_2$, both with finite second moments, then $EZ_1=EZ_2$ and $Var(Z_1)\leq Var(Z_2)$.  Another basic property is closure under mixtures: if distributions $F_i,\ G_i,\ i=1,2,$ satisfy $F_1\leq_{\rm cx} F_2$ and $G_1\leq_{\rm cx} G_2$ then $\rho F_1 +(1-\rho) G_1\leq_{\rm cx} \rho F_2 + (1-\rho) G_2,\ \rho\in [0, 1]$; closure under mixtures also holds for $\leq_{\rm icx}$ and $\leq_{\rm st}$.  (We use the notation $\leq_{\rm st},\ \leq_{\rm cx},\ \leq_{\rm icx}$ with distribution functions as well as random variables.)  For further properties and applications of various stochastic orders, see M\"{u}ller and Stoyan (2002) and Shaked and Shanthikumar (2007). 

\section{Prior mean monotonicity}
Let us denote the maximum expected payoff of a two-armed Dirichlet bandit $(\alpha_1, \alpha_2; A_n)$ by $W(\alpha_1, \alpha_2; A_n)$.  Let $W^i(\alpha_1, \alpha_2; A_n)$ be the expected payoff when selecting arm $i$ initially and using an optimal strategy thereafter.  Then 
\begin{equation}
\label{Wdef1}
W(\alpha_1, \alpha_2; A_n)=\max\left\{W^1(\alpha_1, \alpha_2; A_n), W^2(\alpha_1, \alpha_2; A_n)\right\}. 
\end{equation}
Suppose arm 1 is selected initially, resulting in an observation $X$.  Because the prior on $P_1$ is a Dirichlet process, the posterior is again a Dirichlet process ${\rm DP}(\alpha_1+\delta_X)$, where $\delta_x$ denotes a point mass at $x$.  Thus we have 
\begin{align}
\label{Wdef2}
W^1(\alpha_1, \alpha_2; A_n) &= a_1 \mu_1 + \left. E\left[W(\alpha_1 +\delta_X, \alpha_2; A^1_n)\right|\alpha_1\right],\\
\label{Wdef3}
W^2(\alpha_1, \alpha_2; A_n) &= a_1 \mu_2 + \left. E\left[W(\alpha_1, \alpha_2 +\delta_Y; A^1_n)\right|\alpha_2\right],
\end{align}
where $A_n^1=(a_2, a_3, \ldots, a_n)$ and $\mu_i$ denotes the first moment of $\alpha_i$, which is also the expected value of an observation from arm $i$.  In $E[g(X)|\alpha]$, the distribution of $X$ is $\alpha/M$ with $M=\alpha(\mathbf{R})$.  The quantities $W,\ W^1$ and $W^2$ are well defined and finite as long as $\alpha_i,\ i=1,2,$ have finite first moments, which we assume throughout. 

Lemma \ref{lem1} reveals a convexity property of $W$ which we shall use repeatedly.

\begin{lemma}
\label{lem1}
Let $\alpha$ be a finite measure on $\mathbf{R}$ with a finite mean.  Then, for $u, v\in \mathbf{R}$ and $r>0$, the function $W(\alpha +\rho \delta_u + (r-\rho)\delta_v, \alpha_2; A_n)$ is convex in $\rho\in [0,r]$.  
\end{lemma}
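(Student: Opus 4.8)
The plan is to induct on the horizon $n$, the length of the discount sequence $A_n$, using the dynamic programming recursion (\ref{Wdef1})--(\ref{Wdef3}). Write $\alpha_\rho=\alpha+\rho\delta_u+(r-\rho)\delta_v$, which is \emph{affine} in $\rho$, and set $M=\alpha(\mathbf{R})$. For the base case $n=1$ only the immediate payoff remains, so $W(\alpha_\rho,\alpha_2;A_1)=\max\{a_1\mu_1(\rho),\,a_1\mu_2\}$, where $\mu_1(\rho)=(m+\rho u+(r-\rho)v)/(M+r)$ with $m=\int x\,\alpha(dx)$ is affine in $\rho$ and $\mu_2$ is constant; a maximum of an affine function and a constant is convex. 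For the inductive step, since $W=\max\{W^1,W^2\}$ and a maximum of convex functions is convex, it suffices to prove that $\rho\mapsto W^1(\alpha_\rho,\alpha_2;A_n)$ and $\rho\mapsto W^2(\alpha_\rho,\alpha_2;A_n)$ are each convex, assuming the lemma for every length-$(n-1)$ discount sequence (in particular $A_n^1$), every base measure, every $u,v$, and every $r>0$.

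The term $W^2$ is straightforward. By (\ref{Wdef3}) its immediate part $a_1\mu_2$ does not depend on $\rho$, while the continuation value $E[W(\alpha_\rho,\alpha_2+\delta_Y;A_n^1)\,|\,\alpha_2]$ is an expectation over $Y\sim\alpha_2/M_2$, whose law does not involve $\rho$. For each fixed $Y$ the induction hypothesis applied to the first argument shows $\rho\mapsto W(\alpha_\rho,\alpha_2+\delta_Y;A_n^1)$ is convex, and convexity survives integration over $Y$; hence $W^2$ is convex.

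The term $W^1$ is the crux, because selecting arm $1$ makes \emph{both} the posterior $\alpha_\rho+\delta_X$ and the sampling law $\alpha_\rho/(M+r)$ of the observation $X$ depend on $\rho$. Writing the continuation value as
\[
V(\rho)=\frac{1}{M+r}\int W(\alpha+\delta_x+\rho\delta_u+(r-\rho)\delta_v,\,\alpha_2;A_n^1)\,\alpha_\rho(dx),
\]
I would note that for each fixed $x$ the integrand is convex in $\rho$ by the induction hypothesis (base measure $\alpha+\delta_x$, total weight $r$), while $\alpha_\rho$ is affine in $\rho$ and nonnegative on $[0,r]$. Fixing $\rho_0<\rho_1$ in $[0,r]$ and $\rho_\lambda=(1-\lambda)\rho_0+\lambda\rho_1$, I would bound the integrand at $\rho_\lambda$ by its convex combination and replace $\alpha_{\rho_\lambda}$ by $(1-\lambda)\alpha_{\rho_0}+\lambda\alpha_{\rho_1}$, then expand the product of the two convex combinations. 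The two diagonal terms reproduce $(1-\lambda)V(\rho_0)+\lambda V(\rho_1)$, and, using $\alpha_{\rho_1}-\alpha_{\rho_0}=(\rho_1-\rho_0)(\delta_u-\delta_v)$ to evaluate the integrand at the two atoms, the remaining cross terms collapse to
\[
V(\rho_\lambda)-(1-\lambda)V(\rho_0)-\lambda V(\rho_1)\le\frac{\lambda(1-\lambda)(\rho_1-\rho_0)}{M+r}\bigl[D(\rho_0)-D(\rho_1)\bigr],
\]
where $D(s)=\Psi(s+1)-\Psi(s)$ and $\Psi(s)=W(\alpha+s\delta_u+(r+1-s)\delta_v,\,\alpha_2;A_n^1)$.

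It then remains to check that $D$ is nondecreasing, and this is exactly where the induction closes: $\Psi$ is convex on $[0,r+1]$ by the induction hypothesis (base measure $\alpha$, total weight $r+1$, discount $A_n^1$), and the unit increment $s\mapsto\Psi(s+1)-\Psi(s)$ of a convex function is nondecreasing, so $D(\rho_0)-D(\rho_1)\le0$ and $V$ is convex; since $a_1\mu_1(\rho)$ is affine, $W^1$ is convex and the induction is complete. I expect the main obstacle to be precisely $W^1$: the simultaneous $\rho$-dependence of the posterior and of the observation's law defeats the naive ``expectation of a convex function'' argument that works for $W^2$, and it is the cross-term bookkeeping together with the recognition of $D$ as a unit increment of the convex function $\Psi$ that makes the estimate close. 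The remaining points---that all measures involved have finite first moments so that $W$ is finite, and that $\alpha_\rho$ stays a nonnegative measure on $[0,r]$ so the pointwise bound may be integrated against it---are routine.
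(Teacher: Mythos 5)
Your proof is correct, and it shares the paper's skeleton: induction on $n$, reduction via (\ref{Wdef1})--(\ref{Wdef3}) to the convexity of $W^1$ and $W^2$, the routine disposal of $W^2$, and the central role of the function you call $\Psi$ (the paper's $\phi$), which records the continuation value when the observation lands on one of the two atoms. Where you genuinely differ is in how the crux step for $W^1$ is closed. The paper first splits the sampling law $\alpha_\rho/(M+r)$ into its $\alpha$-component and its two atoms, so the continuation value becomes a convex term plus $\bigl[\rho\phi(\rho+1)+(r-\rho)\phi(\rho)\bigr]/(M+r)$, and then must prove that $\psi(\rho)=\rho\phi(\rho+1)+(r-\rho)\phi(\rho)$ is convex; it does this by a second-derivative computation valid for smooth $\phi$, followed by an appeal to ``a standard limiting argument'' for general convex $\phi$. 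You instead keep the integral $V(\rho)$ whole, use pointwise convexity of the integrand (induction hypothesis with base measure $\alpha+\delta_x$) together with affinity of the measure $\alpha_\rho$, and expand the product of the two convex combinations; because $\alpha_{\rho_1}-\alpha_{\rho_0}=(\rho_1-\rho_0)(\delta_u-\delta_v)$ is supported on the two atoms, the cross terms collapse to $D(\rho_0)-D(\rho_1)$ with $D(s)=\Psi(s+1)-\Psi(s)$, and the nondecreasing-unit-increments property of the convex $\Psi$ finishes. What your route buys: it is derivative-free --- the only consequence of convexity it uses holds for every convex function, so the paper's smoothing/limiting step disappears --- and the $\alpha$-part is absorbed automatically rather than split off by hand; indeed, specializing your expansion to $\alpha=0$ gives an elementary proof of the paper's claim that $\psi$ is convex. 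The price is somewhat heavier bookkeeping in the expansion; both arguments invoke the induction hypothesis in the same three places (for $W^2$, for the integrand pointwise in $x$, and for $\Psi$ on $[0,r+1]$), so the two proofs are logically parallel even though the finishing mechanisms differ.
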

\begin{proof}
Let us use induction on $n$.  It is easy to check that the claim holds for $n=1$.  For $n\geq 2$, we note that by (\ref{Wdef1}) it suffices to show that each of $W^i(\alpha +\rho \delta_u + (r-\rho) \delta_v, \alpha_2; A_n),\ i=1,2,$ is convex in $\rho\in [0,r]$.  Since the mean of $\alpha +\rho \delta_u + (r-\rho) \delta_v$ is linear in $\rho$, by (\ref{Wdef2}) and (\ref{Wdef3}), we only need to show that both 
\begin{equation}
\label{convex1}
\left. E\left[W(\alpha + \rho \delta_u +(r-\rho)\delta_v + \delta_X, \alpha_2; A^1_n)\right|\alpha +\rho \delta_u + (r-\rho)\delta_v\right]\quad {\rm and}
\end{equation}
\begin{equation}
\label{convex2}
\left. E\left[W(\alpha +\rho\delta_u +(r-\rho)\delta_v, \alpha_2 +\delta_Y; A^1_n)\right|\alpha_2\right]
\end{equation}
are convex in $\rho$.  Convexity of (\ref{convex2}) follows from the induction hypothesis.  To deal with (\ref{convex1}), we directly compute
\begin{align}
\nonumber
E[W(\alpha &+ \rho \delta_u +(r-\rho)\delta_v + \delta_X, \alpha_2; A^1_n )|\alpha +\rho \delta_u + (r-\rho)\delta_v]\\ 
\label{convex3}
= &\frac{M}{M + r} E[W(\alpha +\rho \delta_u + (r-\rho)\delta_v +\delta_X, \alpha_2; A^1_n)|\alpha]\\
\label{convex4}
&\quad +\frac{\rho \phi(\rho+1) + (r-\rho) \phi(\rho)}{M + r},
\end{align}
where $M=\alpha(\mathbf{R})$ and 
$$\phi(\rho)= W(\alpha + \rho \delta_u + (r+1-\rho)\delta_v, \alpha_2; A^1_n).$$  
By the induction hypothesis, $\phi(\rho)$ is convex in $\rho\in [0, r+1]$.  We claim that this implies that $\psi(\rho)\equiv \rho \phi(\rho+1) + (r-\rho) \phi(\rho)$ is convex in $\rho\in [0, r]$.  In fact, if $\phi(\rho)$ is twice differentiable, then we have
$$\psi''(\rho) = 2(\phi'(\rho+1)-\phi'(\rho)) + \rho \phi''(\rho+1) + (r-\rho) \phi''(\rho)\geq 0,\quad \rho\in [0, r],$$ 
by the convexity of $\phi$.  A standard limiting argument shows that $\psi(\rho)$ is convex in $\rho \in [0, r]$ as long as $\phi(\rho)$ is convex in $\rho\in [0, r+1]$ without assuming differentiability.  Hence the second term (\ref{convex4}) is convex.  The first term (\ref{convex3}) is convex in $\rho\in [0, r]$ by the induction hypothesis, since in this expectation $X$ is distributed according to $\alpha/M$ independently of $\rho$.  Thus the convexity of (\ref{convex1}) is established.  
\end{proof}

Theorem \ref{thm1} says that the value of the bandit increases as the mean of the Dirichlet process prior for any arm becomes stochastically larger and more dispersed.  This strengthens Proposition 2.2 of Clayton and Berry (1985) who consider the usual stochastic order rather than the increasing convex order. 

\begin{theorem}
\label{thm1}
If $M>0$ and $F\leq_{\rm icx} \tilde{F}$, both with finite means, then 
$$W(M F, \alpha_2; A_n)\leq W(M \tilde{F}, \alpha_2; A_n).$$ 
\end{theorem}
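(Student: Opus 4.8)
The plan is to induct on the horizon $n$, exploiting the recursion \eqref{Wdef1}--\eqref{Wdef3}. Write $\mu_F=\int x\,dF(x)$. The case $n=1$ is immediate: here $W(MF,\alpha_2;A_1)=\max\{a_1\mu_F,\,a_1\mu_2\}$, and since $\phi(x)=x$ is increasing and convex, $F\le_{\rm icx}\tilde F$ forces $\mu_F\le\mu_{\tilde F}$, whence the claim (recall $a_1\ge 0$). For the inductive step, because $W=\max\{W^1,W^2\}$ and $\max$ preserves inequalities, it suffices to show that each of $W^1$ and $W^2$ does not decrease when $F$ is replaced by $\tilde F$.

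The term $W^2$ is handled directly. In \eqref{Wdef3} the arm-2 observation $Y$ has a distribution not involving the prior mean of arm~1, so for each value $y$ the induction hypothesis (at horizon $n-1$, with arm-2 measure $\alpha_2+\delta_y$) gives $W(MF,\alpha_2+\delta_y;A^1_n)\le W(M\tilde F,\alpha_2+\delta_y;A^1_n)$; integrating over $Y$ yields $W^2(MF,\alpha_2;A_n)\le W^2(M\tilde F,\alpha_2;A_n)$. The substance lies in $W^1$. By \eqref{Wdef2} and $a_1\mu_F\le a_1\mu_{\tilde F}$, it remains to prove
\[
E_{X\sim F}\!\left[W(MF+\delta_X,\alpha_2;A^1_n)\right]\le E_{X\sim\tilde F}\!\left[W(M\tilde F+\delta_X,\alpha_2;A^1_n)\right],
\]
where the observation $X$ is drawn from the prior mean ($F$ or $\tilde F$) of arm~1, so that \emph{both} the integrand and the integrating measure change with the prior.

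I would bridge the two sides through the intermediate quantity $E_{X\sim F}\!\left[W(M\tilde F+\delta_X,\alpha_2;A^1_n)\right]$. For the first inequality, fix $x$ and write $MF+\delta_x=(M+1)G_x$ and $M\tilde F+\delta_x=(M+1)\tilde G_x$, with $G_x=\tfrac{M}{M+1}F+\tfrac{1}{M+1}\delta_x$ and $\tilde G_x=\tfrac{M}{M+1}\tilde F+\tfrac{1}{M+1}\delta_x$. Since $\le_{\rm icx}$ is closed under mixtures, $F\le_{\rm icx}\tilde F$ gives $G_x\le_{\rm icx}\tilde G_x$ (both with finite means), so the induction hypothesis (horizon $n-1$, weight $M+1$) yields $W(MF+\delta_x,\alpha_2;A^1_n)\le W(M\tilde F+\delta_x,\alpha_2;A^1_n)$ for every $x$; integrating against $F$ gives the first inequality. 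For the second inequality the integrand is fixed while the measure moves: setting $\Psi(x)=W(M\tilde F+\delta_x,\alpha_2;A^1_n)$, I must pass from $E_{X\sim F}\Psi$ to $E_{X\sim\tilde F}\Psi$, which by the defining property \eqref{orders} of $\le_{\rm icx}$ follows once $\Psi$ is shown to be increasing and convex in $x$.

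The main obstacle is precisely this last point: that $x\mapsto W(\alpha+\delta_x,\alpha_2;A_n)$ is increasing and convex. This does \emph{not} follow from Lemma~\ref{lem1}, which gives convexity in the \emph{mass} split between two fixed atoms, not in the \emph{location} of a single atom; it requires a companion induction on $n$ of the same flavour. I would in fact prove the slightly stronger statement that $x\mapsto W(\alpha+c\delta_x,\alpha_2;A_n)$ is increasing and convex for every fixed mass $c\ge 0$, because the recursion feeds an observation back to the point $x$ and thereby reinforces the atom from mass $c$ to $c+1$. Granting this, monotonicity and convexity propagate through \eqref{Wdef1}--\eqref{Wdef3}, since $\max$, nonnegative combinations, and expectations over observation laws independent of $x$ all preserve the class of increasing convex functions; the only delicate contributions in $W^1$ are the averaged term $E_{X'\sim\alpha/M}\!\left[W(\alpha+\delta_{X'}+c\delta_x,\alpha_2;A^1_n)\right]$ and the reinforced term $W(\alpha+(c+1)\delta_x,\alpha_2;A^1_n)$, which is exactly where the variable-mass strengthening of the hypothesis is consumed. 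With $\Psi$ thus increasing and convex, the two bridging inequalities combine to give $W^1(MF,\alpha_2;A_n)\le W^1(M\tilde F,\alpha_2;A_n)$, completing the induction.
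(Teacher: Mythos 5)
Your proof is correct, and its outer skeleton is the paper's: induction on $n$, disposal of $W^2$ by the induction hypothesis, and a bridge for $W^1$ through the intermediate quantity $E_{X\sim F}[W(M\tilde F+\delta_X,\alpha_2;A^1_n)]$, using closure of $\leq_{\rm icx}$ under mixtures for the first inequality and the increasing convexity of $\Psi(x)=W(M\tilde F+\delta_x,\alpha_2;A^1_n)$ for the second. Where you genuinely diverge is in how that increasing convexity is established, and there your side claim that it ``does not follow from Lemma~\ref{lem1}'' is too strong: the paper derives exactly this from Lemma~\ref{lem1}. Fixing $u<v$, Lemma~\ref{lem1} with $r=1$ gives the midpoint inequality $W(M\tilde F+\delta_u,\alpha_2;A^1_n)+W(M\tilde F+\delta_v,\alpha_2;A^1_n)\geq 2W(M\tilde F+(\delta_u+\delta_v)/2,\alpha_2;A^1_n)$, and then the Theorem~\ref{thm1} induction hypothesis, applied to the comparison $\bigl(M\tilde F+\delta_{(u+v)/2}\bigr)/(M+1)\leq_{\rm icx}\bigl(M\tilde F+(\delta_u+\delta_v)/2\bigr)/(M+1)$, replaces the two half-atoms by a full atom at the midpoint; monotonicity in $x$ comes from $(M\tilde F+\delta_u)/(M+1)\leq_{\rm icx}(M\tilde F+\delta_v)/(M+1)$. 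So mass-split convexity plus the icx induction hypothesis does yield location convexity. Your alternative---a standalone companion induction showing $x\mapsto W(\alpha+c\delta_x,\alpha_2;A_n)$ is increasing and convex for every mass $c\geq 0$---is nevertheless a valid substitute: the decomposition of the arm-1 expectation into $\frac{M}{M+c}E_{X'\sim\alpha/M}[W(\alpha+\delta_{X'}+c\delta_x,\alpha_2;A^1_n)]$ plus $\frac{c}{M+c}W(\alpha+(c+1)\delta_x,\alpha_2;A^1_n)$ is exactly right, the variable-$c$ strengthening is precisely what absorbs the reinforcement of the atom from $c$ to $c+1$, and the closure properties you invoke (maxima, nonnegative combinations, integration against laws not depending on $x$) do preserve the class of increasing convex functions. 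What your route buys: it produces genuine convexity directly (the paper gets midpoint convexity, which needs monotonicity to upgrade), it decouples the convexity-in-location statement from the theorem's own induction, and the companion lemma is of independent interest. What it costs: a second full induction, and the economy the paper gains by reusing Lemma~\ref{lem1}, which it needs anyway for Lemma~\ref{lem4} and Theorem~\ref{thm2}.
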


\begin{proof}
Let us use induction.  The claim obviously holds for $n=1$.  For $n\geq 2$ we have $W^2(M F, \alpha_2; A_n) \leq W^2(M\tilde{F}, \alpha_2; A_n)$ by (\ref{Wdef3}) and the induction hypothesis.  Moreover,  
\begin{align*}
W^1(M F, \alpha_2; A_n) &=a_1 E(X|F) + E[W(M F +\delta_X, \alpha_2; A^1_n)|F]\\
&\leq a_1 E(X|\tilde{F}) + E[W(M \tilde{F} + \delta_X, \alpha_2; A^1_n)|F]\\
&\leq a_1 E(X|\tilde{F}) + E[W(M \tilde{F} + \delta_X, \alpha_2; A^1_n)|\tilde{F}]\\
&=W^1(M \tilde{F}, \alpha_2; A_n), 
\end{align*}
where the first inequality follows from $F\leq_{\rm icx} \tilde{F}$ and the induction hypothesis, noting that $(M F +\delta_x)/(M+1) \leq_{\rm icx} (M \tilde{F} + \delta_x)/(M+1)$ for any $x$; the second inequality holds by the definition of $\leq_{\rm icx}$, because $W(M\tilde{F} +\delta_x, \alpha_2; A^1_n)$ is an increasing, convex function of $x$.  To show this, fix $-\infty<u<v<\infty$.  It is easy to show $(M\tilde{F} +\delta_u)/(M+1) \leq_{\rm icx} (M\tilde{F} +\delta_v)/(M+1),$ which, by the induction hypothesis, implies $W(M\tilde{F} +\delta_u, \alpha_2; A^1_n) \leq W(M\tilde{F} +\delta_v, \alpha_2; A^1_n)$.  Moreover,
\begin{align*}
W(M\tilde{F} +\delta_u, \alpha_2; A^1_n) &+ W(M\tilde{F} +\delta_v, \alpha_2; A^1_n)\\
&\geq 2 W(M\tilde{F} + (\delta_u +\delta_v)/2, \alpha_2; A^1_n)\\
&\geq 2 W(M\tilde{F} + \delta_{(u+v)/2}, \alpha_2; A^1_n),
\end{align*}
where the first inequality follows from Lemma \ref{lem1}, and the second inequality holds by the induction hypothesis, noting that 
$$\frac{M\tilde{F} + \delta_{(u+v)/2}}{M+1} \leq_{\rm icx} \frac{M\tilde{F} + (\delta_u +\delta_v)/2}{M+1}.$$  
Hence $W(M\tilde{F} +\delta_x, \alpha_2; A^1_n)$ is convex in $x$ as needed. 
\end{proof}

{\bf Remark 1.}  Theorem \ref{thm1} extends to bandits with more than two arms.  That is, the maximum expected payoff increases when the mean of the Dirichlet process prior for any arm becomes larger in the increasing convex order.  We present the two-armed version for notational convenience.  The discount sequence in Theorem~\ref{thm1} is very general, i.e., we only assume $A_n$ is nonnegative.  By approximation, this can be further extended to the infinite-horizon case assuming $\sum_{i=1}^\infty a_i <\infty$.  Similar comments apply to Theorem~\ref{thm2} in Section~3. 

When arm 2 has a known distribution $P_2$ with mean $\lambda$, the problem reduces to a one-armed bandit.  Without loss of generality we may assume the known arm yields a constant payoff $\lambda$ at each stage, i.e., we consider the  $(\alpha, \delta_{\lambda}; A_n)$ bandit (the subscript on $\alpha_1$ is dropped for convenience).  It is well known that, assuming the discount sequence is regular in the sense that $(\sum_{i\geq j+1} a_i)^2 \geq (\sum_{i\geq j} a_i) (\sum_{i\geq j+2} a_i)$ for all $j\geq 1$, this one-armed bandit is an optimal stopping problem, i.e., if at any stage it is optimal to pull arm 2 then arm 2 should be used in all subsequent stages; see Berry and Fristedt (1979).  If $A_n$ is regular, then there exists a break-even value $\Lambda(\alpha; A_n)$ for the $(\alpha, \delta_\lambda; A_n)$ bandit, such that arm 1 is optimal initially if and only if $\lambda \leq \Lambda(\alpha; A_n)$ and arm 2 is optimal initially if and only if $\lambda \geq \Lambda(\alpha; A_n)$.  For infinite-horizon geometric discounting, this break-even value is also known as the dynamic allocation index or Gittins index (Gittins and Jones 1974).  The following result holds by the optimal stopping characterization and is stated for uniform discounting as Lemma 2.1 in Clayton and Berry (1985). 

\begin{lemma}
\label{lemlam}
If $A_n$ is regular, then $\Lambda(\alpha; A_n)$ is the smallest $\lambda$ such that $W(\alpha, \delta_\lambda; A_n)\leq \lambda \sum_{i=1}^n a_i $. 
\end{lemma}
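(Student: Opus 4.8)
The plan is to show that the set $\{\lambda: W(\alpha,\delta_\lambda;A_n)\le \lambda\sum_{i=1}^n a_i\}$ coincides with the interval $[\Lambda(\alpha;A_n),\infty)$, whence its smallest element is exactly $\Lambda(\alpha;A_n)$. The starting observation is that pulling arm 2 at every stage is a feasible strategy: since the known arm yields the constant payoff $\lambda$, this strategy has expected payoff $\lambda\sum_{i=1}^n a_i$. Because $W$ is the maximum expected payoff over all strategies, this immediately gives $W(\alpha,\delta_\lambda;A_n)\ge \lambda\sum_{i=1}^n a_i$ for every $\lambda$. Consequently the inequality $W(\alpha,\delta_\lambda;A_n)\le \lambda\sum_{i=1}^n a_i$ holds if and only if it holds with equality, i.e. if and only if the ``always pull arm 2'' strategy is optimal.

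Next I would connect optimality of the pure arm-2 strategy with arm 2 being optimal initially. One direction is immediate: if the pure arm-2 strategy is optimal, then there is an optimal strategy that selects arm 2 at the first stage, so arm 2 is optimal initially. For the converse I would invoke the optimal stopping characterization of the regular one-armed bandit (Berry and Fristedt 1979): if arm 2 is optimal at some stage, it is optimal at all subsequent stages. Hence, if arm 2 is optimal at the first stage, it is optimal to keep pulling arm 2 throughout, so the pure arm-2 strategy is optimal and $W(\alpha,\delta_\lambda;A_n)=\lambda\sum_{i=1}^n a_i$. This establishes the equivalence that $W(\alpha,\delta_\lambda;A_n)\le \lambda\sum_{i=1}^n a_i$ if and only if arm 2 is optimal initially.

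Finally I would appeal to the defining property of the break-even value: under regularity, arm 2 is optimal initially if and only if $\lambda\ge \Lambda(\alpha;A_n)$. Combining this with the previous equivalence shows $\{\lambda: W(\alpha,\delta_\lambda;A_n)\le \lambda\sum_{i=1}^n a_i\}=[\Lambda(\alpha;A_n),\infty)$. Since this set is closed on the left, its minimum is $\Lambda(\alpha;A_n)$, which is the claim.

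The main obstacle is the second step, and in particular the converse direction, where I must upgrade ``arm 2 is optimal initially'' (an existence statement about some optimal strategy) to the stronger assertion that the pure arm-2 strategy is optimal. This is exactly where regularity is essential, since it is the optimal stopping structure, not merely the definition of the break-even value, that forces arm 2, once optimal, to remain optimal at every later stage. I would take care to state the feasibility bound and the optimal stopping property precisely so that the chain of equivalences is airtight, including the boundary case $\lambda=\Lambda(\alpha;A_n)$, where both arms are optimal initially and the collection of optimal strategies still contains the pure arm-2 strategy.
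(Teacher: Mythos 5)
Your proof is correct and follows essentially the same route the paper takes: the paper gives no detailed argument, stating only that the lemma "holds by the optimal stopping characterization" of Berry and Fristedt (1979), and your chain of equivalences (feasibility bound $W \ge \lambda\sum_{i=1}^n a_i$, optimal stopping to upgrade "arm 2 optimal initially" to "pure arm-2 strategy optimal," then the defining property of $\Lambda$) is precisely the fleshing-out of that citation, including the boundary case $\lambda=\Lambda(\alpha;A_n)$.
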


Lemma \ref{lemlam} and Theorem \ref{thm1} yield the following result comparing $\Lambda(\alpha; A_n)$.

\begin{corollary}
\label{coro1}
For $M>0$ and $F\leq_{\rm icx} \tilde{F}$, both with finite means, we have $\Lambda(MF; A_n)\leq \Lambda(M \tilde{F}; A_n)$, assuming $A_n$ is a regular discount sequence.
\end{corollary}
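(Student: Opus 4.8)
The plan is to combine the value monotonicity of Theorem~\ref{thm1} with the variational characterization of the break-even value provided by Lemma~\ref{lemlam}. The essential observation is that the one-armed bandit $(\alpha, \delta_\lambda; A_n)$ is merely the two-armed bandit with $\alpha_2=\delta_\lambda$, so Theorem~\ref{thm1} applies verbatim with this choice of second arm. Concretely, since $\delta_\lambda$ is a finite measure with finite mean and $F\leq_{\rm icx}\tilde F$, Theorem~\ref{thm1} gives
\begin{equation}
\label{coro1key}
W(M F, \delta_\lambda; A_n) \leq W(M \tilde F, \delta_\lambda; A_n)
\end{equation}
for every $\lambda\in\mathbf{R}$.

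With (\ref{coro1key}) in hand, I would argue directly from the ``smallest $\lambda$'' description in Lemma~\ref{lemlam}. Write $\tilde\lambda=\Lambda(M\tilde F; A_n)$, which is well defined because $A_n$ is regular. By Lemma~\ref{lemlam}, $\tilde\lambda$ satisfies $W(M\tilde F, \delta_{\tilde\lambda}; A_n)\leq \tilde\lambda\sum_{i=1}^n a_i$. Combining this with (\ref{coro1key}) evaluated at $\lambda=\tilde\lambda$ yields
$$W(M F, \delta_{\tilde\lambda}; A_n) \leq W(M\tilde F, \delta_{\tilde\lambda}; A_n) \leq \tilde\lambda\sum_{i=1}^n a_i,$$
so $\tilde\lambda$ belongs to the set of values for which the constant-arm payoff dominates the bandit value for the prior $MF$. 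Since $\Lambda(MF; A_n)$ is, again by Lemma~\ref{lemlam}, the smallest such value, we conclude $\Lambda(MF; A_n)\leq \tilde\lambda=\Lambda(M\tilde F; A_n)$, which is the assertion.

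The argument is short because all the real work has been absorbed into Theorem~\ref{thm1}; the only thing to get right is the direction of the inequality. Intuitively, enlarging the Dirichlet mean in the increasing convex order raises the value $W$ of the random arm, so the constant competitor must be set \emph{higher} before stopping becomes worthwhile, and hence the break-even value moves up rather than down. The one place where regularity of $A_n$ is genuinely used is that Lemma~\ref{lemlam} characterizes $\Lambda$ as the minimum of a stopping set of the form $[\Lambda,\infty)$, so that mere membership of $\tilde\lambda$ in the corresponding set for $MF$ already bounds $\Lambda(MF; A_n)$ from above. No additional estimates or induction are required.
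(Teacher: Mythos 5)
Your proof is correct and follows exactly the route the paper takes: the paper proves Corollary~\ref{coro1} by simply citing Lemma~\ref{lemlam} together with Theorem~\ref{thm1} (applied with $\alpha_2=\delta_\lambda$), and your write-up supplies precisely the missing details of that combination, with the inequality direction handled correctly via the minimality of $\Lambda(MF;A_n)$ in Lemma~\ref{lemlam}.
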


Suppose $A_n$ is regular.  Monotonicity and continuity considerations (see Clayton and Berry 1985) show that, for the $(\alpha, \delta_\lambda; A_n)$ bandit there exists a break-even observation $b(\alpha; A_n)$ such that if both arms are optimal initially, and an observation $x$ is taken from arm 1, then arm 1 remains optimal if $x\geq b(\alpha; A_n)$ and arm 2 becomes optimal if $x\leq b(\alpha; A_n)$.  That is, 
\begin{align*}
\Lambda(\alpha; A_n) &\geq \Lambda(\alpha+\delta_x; A_n^1),\quad {\rm if}\ x\leq b(\alpha; A_n);\\
\Lambda(\alpha; A_n) &\leq \Lambda(\alpha+\delta_x; A_n^1),\quad {\rm if}\ x\geq b(\alpha; A_n).
\end{align*}
Calculating this break-even observation is nontrivial.  In the case of uniform discounting, Clayton and Berry (1985) prove an upper bound for $b(\alpha; A_n)$ and conjecture that $b(\alpha; A_n)\geq \Lambda(\alpha; A_n)$ based on numerical evidence.  We confirm this in Proposition \ref{prop1}. 

\begin{proposition}
\label{prop1}
Suppose $n\geq 2$ and $A_n$ is regular and all positive.  Then $b(\alpha; A_n)\geq \Lambda(\alpha; A_n)$. 
\end{proposition}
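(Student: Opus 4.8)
Write $\Lambda=\Lambda(\alpha;A_n)$ and $\mu=E(X|\alpha)$. The plan is to reformulate $b(\alpha;A_n)\geq \Lambda$ as a statement about the break-even value of the one-stage-shorter bandit, and then exploit the break-even identity together with the optimal-stopping structure. First I would record that $\Lambda(\alpha+\delta_x;A_n^1)$ is nondecreasing in $x$: for $u\leq v$, closure of $\leq_{\rm icx}$ under mixtures gives $(\alpha+\delta_u)/(M+1)\leq_{\rm icx}(\alpha+\delta_v)/(M+1)$, and since $A_n^1$ is regular (the regularity of $A_n$ for $j\geq 2$ is exactly regularity of $A_n^1$) Corollary~\ref{coro1} applies at weight $M+1$. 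Hence $b$ is the crossing point of $x\mapsto\Lambda(\alpha+\delta_x;A_n^1)$ with $\Lambda$, so $b\geq\Lambda$ is equivalent to $\Lambda(\alpha+\delta_\Lambda;A_n^1)\leq\Lambda$. By Lemma~\ref{lemlam} applied at horizon $n-1$ (using that $W(\beta,\delta_\lambda;B)-\lambda\sum b_i$ is nonincreasing in $\lambda$), this is in turn equivalent to the single inequality
\[
W(\alpha+\delta_\Lambda,\delta_\Lambda;A_n^1)\leq \Lambda\sum_{i=2}^n a_i,
\]
i.e.\ that after the one observation $\Lambda$ the known arm $\delta_\Lambda$ is again (weakly) optimal. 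This is the inequality I would prove.

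Two ingredients feed the argument. First, $\Lambda\geq\mu$: playing arm~1 at every stage yields expected payoff $\mu\sum_{i=1}^n a_i$ (each observation has mean $\mu$ by the martingale property of the posterior mean), so $W(\alpha,\delta_\Lambda;A_n)\geq\mu\sum a_i$, and at $\lambda=\Lambda$ the value equals $\Lambda\sum a_i$. Second, at $\lambda=\Lambda$ both arms are optimal, so $W(\alpha,\delta_\Lambda;A_n)=W^1(\alpha,\delta_\Lambda;A_n)=\Lambda\sum_{i=1}^n a_i$; feeding this into (\ref{Wdef2}) gives the break-even identity
\[
E\!\left[W(\alpha+\delta_X,\delta_\Lambda;A_n^1)\,\middle|\,\alpha\right]=\Lambda\sum_{i=2}^n a_i+a_1(\Lambda-\mu),\qquad X\sim\alpha/M.
\]
Setting $H(x)=W(\alpha+\delta_x,\delta_\Lambda;A_n^1)-\Lambda\sum_{i=2}^n a_i$, the proof of Theorem~\ref{thm1} (via Lemma~\ref{lem1}) shows $H$ is nonnegative, nondecreasing and convex, and the identity reads $E[H(X)]=a_1(\Lambda-\mu)\geq0$. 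The target is $H(\Lambda)=0$.

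For the main step I would pass to the optimal-stopping/index form made available by regularity: break-even means $\sup_\tau E[\sum_{i=1}^\tau a_i(X_i-\Lambda)]=0$ over stopping times $\tau$, where $X_1,X_2,\dots$ are the successive observations from $\alpha$, while $H(\Lambda)=0$ is the assertion $\sup_\sigma E[\sum_{j\geq1}a_{j+1}(X'_j-\Lambda)]\leq0$ for observations $X'$ drawn from $\alpha+\delta_\Lambda$. I would represent the $\alpha+\delta_\Lambda$ sequence by a P\'olya urn: it is the $\alpha$-sequence with draws of the reinforced atom at $\Lambda$ interleaved. The crucial observation is that every atom draw equals $\Lambda$ and therefore contributes $a_{\cdot}(\Lambda-\Lambda)=0$ to the excess reward, so observing the fair value $\Lambda$ is reward-neutral; the continuation excess reward is thus a randomly delayed, thinned copy of the full-horizon excess reward, whose supremum is $0$ at break-even. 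I would then close by induction on $n$, the base case $n=2$ being immediate since there $b=(M+1)\Lambda-M\mu=\Lambda+M(\Lambda-\mu)\geq\Lambda$ by $\Lambda\geq\mu$.

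The hard part will be controlling this random delay. The interleaved atom draws push each genuine observation to a later discount, so the continuation effectively faces an increasing subsequence of $(a_2,\dots,a_n)$ in place of $(a_1,\dots,a_n)$, and one must show that passing to such a delayed subsequence of discounts cannot raise the index above $\Lambda$. This is precisely where regularity, i.e.\ log-concavity of the tail sums $\sum_{i\geq j}a_i$, is indispensable, and where the convexity of $W$ from Lemma~\ref{lem1} together with the martingale property of the posterior mean (which keeps the post-observation mean $\tfrac{M\mu+\Lambda}{M+1}\leq\Lambda$) must be combined to offset the mean increase against the loss from the shortened horizon. I expect this discount-delay comparison to be the main obstacle in the proof.
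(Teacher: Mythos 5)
Your reduction is exactly the one the paper uses: set $\lambda=\Lambda(\alpha;A_n)$, note that by the optimal-stopping characterization $W(\alpha,\delta_\lambda;A_n^1)=\lambda\sum_{i=2}^n a_i$, and observe that $b(\alpha;A_n)\geq\Lambda(\alpha;A_n)$ follows (via Lemma~\ref{lemlam} and the monotonicity of $x\mapsto\Lambda(\alpha+\delta_x;A_n^1)$) once one proves the single inequality
\[
W(\alpha+\delta_\lambda,\delta_\lambda;A_n^1)\leq \lambda\sum_{i=2}^n a_i .
\]
Up to this point your argument is sound, and your $n=2$ computation ($b=(M+1)\Lambda-M\mu\geq\Lambda$ since $\Lambda\geq\mu$) is correct. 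The genuine gap is that the proposal never proves this key inequality, which is where all the mathematical content sits. The P\'olya-urn sketch (``atom draws at $\Lambda$ are reward-neutral, so the continuation is a randomly delayed, thinned copy of a break-even stopping problem'') is an intuition, not an argument, and you flag this yourself. The difficulty you identify is real: under the decomposition $P=(1-B)Q+B\delta_\Lambda$ with $Q\sim{\rm DP}(\alpha)$ and $B\sim{\rm Beta}(1,M)$, the genuine $\alpha$-draws are pushed to random, adaptively revealed later positions in the discount sequence, so you would need a comparison theorem asserting that the break-even value cannot increase when a regular discount sequence is replaced by a randomly delayed, adapted subsequence of its tail. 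That claim is itself of roughly the same depth as the proposition; regularity (log-concavity of tail sums) does not hand it to you, and the induction on $n$ you invoke at the end is never connected to the urn construction.

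The paper closes this gap by a much more elementary device that you should compare with: Lemma~\ref{lem3}, which states that for any $c>0$, any $\lambda$, and an \emph{arbitrary} (not necessarily regular) discount sequence, $W(\alpha+c\delta_\lambda,\delta_\lambda;A_n)\leq W(\alpha,\delta_\lambda;A_n)$. This is precisely your ``observing the fair value is reward-neutral (in fact weakly harmful)'' intuition, but it is proved by a short induction on $n$ directly from the recursions (\ref{Wdef1})--(\ref{Wdef3}): writing
\[
W(\alpha+c\delta_\lambda,\delta_\lambda;A_n)=\max\left\{\frac{M\phi_0+c\phi_1}{M+c},\ \phi_2\right\},
\]
with $\phi_0=a_1\mu+E[W(\alpha+c\delta_\lambda+\delta_X,\delta_\lambda;A_n^1)|\alpha]$, $\phi_1=a_1\lambda+W(\alpha+(c+1)\delta_\lambda,\delta_\lambda;A_n^1)$ and $\phi_2=a_1\lambda+W(\alpha+c\delta_\lambda,\delta_\lambda;A_n^1)$, each $\phi_i$ is bounded by $W(\alpha,\delta_\lambda;A_n)$ using the induction hypothesis together with (\ref{Wdef2}) or (\ref{Wdef3}). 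No stopping-time representation, no coupling, and no regularity are needed; regularity enters only through Lemma~\ref{lemlam} at the very end, exactly where you also use it. Replacing your urn sketch by this lemma (applied with $c=1$ and discount $A_n^1$) turns your outline into a complete proof; the rest of your reduction can stand as written.
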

As noted by Berry and Fristedt (1985; p.\ 131), Proposition~\ref{prop1} has an intuitive interpretation.  Suppose both arms are optimal initially, and arm 1 is selected.  If the initial pull on arm 1 yields no more than $\Lambda(\alpha; A_n)$, which is the yield of arm 2 per pull, the hope of getting higher payoff fades.  Not surprisingly, arm 2 becomes optimal afterwards.  This suggests that the break-even observation is at least $\Lambda(\alpha; A_n)$. 

To prove Proposition~\ref{prop1} we need a lemma.
\begin{lemma}
\label{lem3}
For $c>0,\ \lambda\in \mathbf{R}$ and an arbitrary discount sequence $A_n$, we have  
$$W(\alpha+ c\delta_\lambda, \delta_\lambda; A_n)\leq W(\alpha, \delta_\lambda; A_n).$$ 
\end{lemma}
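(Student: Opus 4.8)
The plan is to argue by induction on the horizon $n$, paralleling the inductive scheme of Theorem~\ref{thm1}. The base case $n=1$ should be immediate: writing $\mu$ for the mean of $\alpha/M$ with $M=\alpha(\mathbf{R})$, the mean of $(\alpha+c\delta_\lambda)/(M+c)$ equals $(M\mu+c\lambda)/(M+c)$, a weighted average of $\mu$ and $\lambda$ that therefore lies between them; since $W(\cdot,\delta_\lambda;A_1)=a_1\max\{\mathrm{mean},\lambda\}$, pushing $\mu$ toward $\lambda$ can only decrease the maximum, giving the claim. For the inductive step I would split $W=\max\{W^1,W^2\}$ and handle the two terms separately, aiming to show that each of $W^1(\alpha+c\delta_\lambda,\delta_\lambda;A_n)$ and $W^2(\alpha+c\delta_\lambda,\delta_\lambda;A_n)$ is bounded above by $W(\alpha,\delta_\lambda;A_n)$.

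The arm-2 term is the easy one. Because arm~2 is the known constant arm, selecting it collects $a_1\lambda$ and leaves arm~2 effectively unchanged, so (\ref{Wdef3}) gives $W^2(\beta,\delta_\lambda;A_n)=a_1\lambda+W(\beta,\delta_\lambda;A^1_n)$ for any prior $\beta$. Applying the induction hypothesis to the shorter sequence $A^1_n$ then yields $W^2(\alpha+c\delta_\lambda,\delta_\lambda;A_n)\le W^2(\alpha,\delta_\lambda;A_n)\le W(\alpha,\delta_\lambda;A_n)$ at once.

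The substance of the argument lies in the arm-1 term. Here I would expand (\ref{Wdef2}) for the prior $\alpha+c\delta_\lambda$ by decomposing the posterior-predictive draw $X\sim(\alpha+c\delta_\lambda)/(M+c)$ into its two components: with probability $M/(M+c)$ the observation comes from $\alpha/M$, and with probability $c/(M+c)$ it equals $\lambda$, producing the posterior $\alpha+(c+1)\delta_\lambda$. To each resulting value function I would apply the induction hypothesis on $A^1_n$ — with constant $c$ on the measure $\alpha+\delta_X$ in the first piece (replacing $\alpha+c\delta_\lambda+\delta_X$ by $\alpha+\delta_X$), and with constant $c+1$ in the second (replacing $\alpha+(c+1)\delta_\lambda$ by $\alpha$). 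The decisive observation, and the step I expect to carry the proof, is that after these substitutions the upper bound regroups \emph{exactly} into the convex combination $\frac{M}{M+c}\,W^1(\alpha,\delta_\lambda;A_n)+\frac{c}{M+c}\,W^2(\alpha,\delta_\lambda;A_n)$, with the immediate-payoff term $a_1(M\mu+c\lambda)/(M+c)$ splitting as $a_1\mu$ and $a_1\lambda$ into the two pieces. Since a convex combination of $W^1$ and $W^2$ never exceeds their maximum, this gives $W^1(\alpha+c\delta_\lambda,\delta_\lambda;A_n)\le W(\alpha,\delta_\lambda;A_n)$.

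Combining the two parts yields $W(\alpha+c\delta_\lambda,\delta_\lambda;A_n)=\max\{W^1,W^2\}\le W(\alpha,\delta_\lambda;A_n)$ and closes the induction. The only delicate point is the bookkeeping in the decomposition of the predictive distribution — in particular, verifying that the immediate-payoff contributions recombine with precisely the weights $M/(M+c)$ and $c/(M+c)$ so that the bound collapses to a convex combination of $W^1$ and $W^2$. Once that algebra is in place, the passage to the maximum is the crux, and the remaining verifications (finiteness of means so the induction hypothesis applies, nonnegativity of $a_1$ in the base case) are routine.
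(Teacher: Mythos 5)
Your proposal is correct and follows essentially the same route as the paper's proof: induction on $n$, with the arm-1 term handled by decomposing the predictive distribution $(\alpha+c\delta_\lambda)/(M+c)$ into its $\alpha/M$ and $\delta_\lambda$ components, applying the induction hypothesis to each resulting continuation value, and recognizing the bound as the convex combination $\frac{M}{M+c}W^1(\alpha,\delta_\lambda;A_n)+\frac{c}{M+c}W^2(\alpha,\delta_\lambda;A_n)\leq W(\alpha,\delta_\lambda;A_n)$. The only cosmetic difference is that you apply the induction hypothesis with weight $c+1$ directly to bound $W(\alpha+(c+1)\delta_\lambda,\delta_\lambda;A^1_n)$ by $W(\alpha,\delta_\lambda;A^1_n)$, whereas the paper passes through the intermediate quantity $W(\alpha+c\delta_\lambda,\delta_\lambda;A^1_n)$ in two steps.
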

\begin{proof}
We use induction on $n$.  The $n=1$ case is easy.  Suppose $n\geq 2$.  Let us write $M=\alpha(\mathbf{R})$ and let $\mu$ be the first moment of $\alpha$.  Direct calculation using (\ref{Wdef1})--(\ref{Wdef3}) yields  
\begin{align}
\label{lam_mono}
W(\alpha + c\delta_\lambda, \delta_\lambda; A_n) =\max\left\{\frac{M\phi_0 + c\phi_1}{M+c},\ \phi_2\right\},
\end{align}
where
\begin{align*}
\phi_0 &= a_1 \mu + E\left[ W(\alpha+ c\delta_\lambda +\delta_X, \delta_\lambda; A_n^1)|\alpha\right];\\
\phi_1 &= a_1 \lambda + W(\alpha + (c+1)\delta_\lambda, \delta_\lambda; A_n^1);\\
\phi_2 &= a_1\lambda + W(\alpha + c\delta_\lambda, \delta_\lambda; A_n^1). 
\end{align*}
Applying the induction hypothesis, and then (\ref{Wdef1}) and (\ref{Wdef2}), we get 
\begin{align*}
\phi_0 &\leq a_1\mu + E\left[ W(\alpha +\delta_X, \delta_\lambda; A_n^1)|\alpha\right]\\
&\leq W(\alpha, \delta_\lambda; A_n). 
\end{align*}
Applying the induction hypothesis, and then (\ref{Wdef1}) and (\ref{Wdef3}), we get 
\begin{align*}
\phi_1 &\leq \phi_2 \leq a_1\lambda + W(\alpha, \delta_\lambda; A_n^1) \leq W(\alpha, \delta_\lambda; A_n). 
\end{align*}
That is, $\phi_i \leq W(\alpha, \delta_\lambda; A_n)$ for $i=0, 1, 2.$  Hence the claim holds by (\ref{lam_mono}). 
\end{proof}
\begin{proof}[Proof of Proposition~\ref{prop1}]
Suppose $\lambda=\Lambda(\alpha; A_n)$.  By the optimal stopping characterization, we have 
$W(\alpha, \delta_\lambda; A_n^1) = \lambda \sum_{i=2}^n a_i.$  Lemma~\ref{lem3} yields $W(\alpha +\delta_\lambda, \delta_\lambda; A_n^1)\leq \lambda \sum_{i=2}^n a_i$.  It follows from Lemma~\ref{lemlam} that $\lambda\geq \Lambda(\alpha +\delta_\lambda; A_n^1)$.  That is, $\Lambda(\alpha; A_n)\geq \Lambda(\alpha +\delta_\lambda; A_n^1)$, which implies $\lambda\leq b(\alpha; A_n)$ (under the assumptions $b(\alpha; A_n)$ is unique). 
\end{proof}

\section{Prior weight monotonicity}
The main result of this section (Theorem \ref{thm2}) shows that the maximum expected payoff of a bandit decreases as the prior weight for the Dirichlet process prior of an arm increases.  When arm 2 is known and the discount sequence is regular, this shows that the break-even value $\Lambda(M_1F_1; A_n)$ decreases as $M_1$ (the prior weight associated with arm 1) increases.  That is, given the same immediate payoff, arm 1 becomes less desirable as the amount of information about it increases. 

\begin{theorem}
\label{thm2}
Let $F$ be a probability distribution on $\mathbf{R}$ with a finite mean.  If $0<M< \tilde{M}$ then 
\begin{equation}
\label{mono2}
W(M F, \alpha_2; A_n)\geq W(\tilde{M} F, \alpha_2; A_n).
\end{equation}
\end{theorem}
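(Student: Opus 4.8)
The plan is to argue by induction on the horizon $n$, mirroring the structure of the proofs of Lemma~\ref{lem1} and Theorem~\ref{thm1}. The case $n=1$ is immediate, since $W(MF,\alpha_2;A_1)=\max\{a_1\mu,a_1\mu_2\}$ does not depend on $M$ (here $\mu$ is the mean of $F$). For $n\ge 2$, by \eqref{Wdef1} it suffices to treat $W^1$ and $W^2$ separately. The arm-2 term is handled at once by the induction hypothesis: in \eqref{Wdef3} the weight $M$ enters only through $W(MF,\alpha_2+\delta_Y;A_n^1)$, so the hypothesis at horizon $n-1$ gives $W^2(MF,\alpha_2;A_n)\ge W^2(\tilde MF,\alpha_2;A_n)$. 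The whole difficulty is the arm-1 term. Since $E(X\mid F)=\mu$ regardless of $M$, the immediate payoffs $a_1\mu$ cancel, and using \eqref{Wdef2} the problem reduces to the single inequality
\begin{equation}
\label{star}
E_{X\sim F}\big[W(MF+\delta_X,\alpha_2;A_n^1)\big]\ \ge\ E_{X\sim F}\big[W(\tilde MF+\delta_X,\alpha_2;A_n^1)\big].
\end{equation}

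The obstacle is that \eqref{star} cannot be obtained from the induction hypothesis term by term, because the posterior mean distributions $(MF+\delta_X)/(M+1)$ and $(\tilde MF+\delta_X)/(\tilde M+1)$ differ in both weight and mean. Writing $c=\tilde M-M$, note $\tilde MF+\delta_X=MF+\delta_X+cF$, so raising the weight simultaneously adds weight $c$ and drags the posterior mean toward $\mu$; by Theorem~\ref{thm1} this mean shift \emph{raises} $W$ when $X<\mu$ and \emph{lowers} it when $X>\mu$, so \eqref{star} fails pointwise in $X$ and can hold only after integrating against $F$. My plan is to separate these two effects through an intermediate measure of matched total weight $\tilde M+1$. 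Set $\bar\beta_X=(MF+\delta_X)/(M+1)$. For the \emph{weight} effect I would apply Theorem~\ref{thm2} itself at horizon $n-1$ to the fixed mean distribution $\bar\beta_X$ with weights $M+1<\tilde M+1$, giving
\begin{equation}
\label{wstep}
W\big((\tilde M+1)\bar\beta_X,\alpha_2;A_n^1\big)\ \le\ W\big((M+1)\bar\beta_X,\alpha_2;A_n^1\big)=W(MF+\delta_X,\alpha_2;A_n^1).
\end{equation}
In view of \eqref{wstep}, inequality \eqref{star} follows once I show the \emph{mean/averaging} step
\begin{equation}
\label{mstep}
E_{X\sim F}\big[W(\tilde MF+\delta_X,\alpha_2;A_n^1)\big]\ \le\ E_{X\sim F}\big[W\big((\tilde M+1)\bar\beta_X,\alpha_2;A_n^1\big)\big].
\end{equation}
A short computation (with $s=c/(M+1)$) identifies $(\tilde M+1)\bar\beta_X=(\tilde M-s)F+(1+s)\delta_X$, so \eqref{mstep} asserts that transferring mass $s$ from the $F$-component onto the atom $\delta_X$,
$$W\big((\tilde M-s)F+(1+s)\delta_X,\alpha_2;A_n^1\big)\ \ge\ W(\tilde MF+\delta_X,\alpha_2;A_n^1),$$
\emph{raises} the value once averaged over $X\sim F$, both measures now having total weight $\tilde M+1$.

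I expect \eqref{mstep} to be the main obstacle, and I would attack it with convexity together with the centering identity $E_{X\sim F}[\delta_X]=F$. First I would extend Lemma~\ref{lem1} from two point masses to a point mass versus the mixing distribution, i.e.\ show that $\tau\mapsto W\big((\tilde M-\tau)F+(1+\tau)\delta_X,\alpha_2;A_n^1\big)$ is convex on $[0,s]$; this is proved by the same recursion-plus-induction computation as in Lemma~\ref{lem1}, writing $F=\int\delta_v\,dF(v)$ and carrying the mixture through. Convexity and the tangent-line bound then reduce \eqref{mstep} to the first-order condition $E_{X\sim F}\big[Dg_X(\delta_X-F)\big]\ge 0$, where $Dg_X$ is the derivative of $W(\,\cdot\,,\alpha_2;A_n^1)$ at the measure $\tilde MF+\delta_X$ in the mass-transfer direction $\delta_X-F$. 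Because $E_{X\sim F}[\delta_X]=F$, this shift cancels exactly at first order for a \emph{linear} functional, so the required inequality is purely a second-order, convexity phenomenon: the marginal value of adding mass must be larger at the already-observed location $X$ than, on average, at an independent draw $v\sim F$. Establishing this sign rigorously---equivalently, that the families $\{(\tilde MF+\delta_X)/(\tilde M+1)\}$ and $\{((\tilde M-s)F+(1+s)\delta_X)/(\tilde M+1)\}$ share the common $F$-mixture $F$ while the latter is the more ``self-reinforcing'' spread around $F$---is where the convexity of $W$ (Lemma~\ref{lem1}) and its increasing-convex mean monotonicity (Theorem~\ref{thm1}) must be combined, and is the crux of the argument. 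The discounting hypotheses enter only through the recursion, so the result will extend to more arms and, by approximation, to the infinite horizon as in Remark~1.
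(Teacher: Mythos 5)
Your reduction is set up correctly, and it is in fact the same decomposition the paper uses, only in the opposite order: the paper first transfers mass from the atom $\delta_X$ onto $F$ at fixed total weight $M+1$ (its Lemma~\ref{lem4}) and then scales the whole measure by $\eta=(\tilde M+1)/(M+1)$ via the induction hypothesis (step (\ref{strict})); you scale first and then must transfer mass $s=(\tilde M-M)/(M+1)$ from the $F$-component onto $\delta_X$ at fixed total weight $\tilde M+1$. Your ``weight step'' is exactly the paper's (\ref{strict}), and your ``mean/averaging step'' is exactly the paper's Lemma~\ref{lem4} (with $\alpha=0$, $L=\tilde M+1$, $\theta_1=\tilde M-s$, $\theta_2=\tilde M$). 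So you have correctly reduced the theorem to the statement of that lemma---but that lemma is the hard technical core of the paper, and you do not prove it.

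Your sketch for this step has two genuine holes. First, the claimed extension of Lemma~\ref{lem1}---convexity of $\tau\mapsto W((\tilde M-\tau)F+(1+\tau)\delta_X,\alpha_2;A_n^1)$---does not follow ``by the same recursion-plus-induction computation.'' The recursion in Lemma~\ref{lem1} works because observing either atom lands the posterior back in the \emph{same} one-parameter family, producing $\psi(\rho)=\rho\phi(\rho+1)+(r-\rho)\phi(\rho)$ with a single convex function $\phi$, and the sign of $\psi''$ rests on $\phi'(\rho+1)\ge\phi'(\rho)$. If one atom is replaced by the spread distribution $F$, observing the atom still gives $\phi(\rho+1)$, but observing a draw $y\sim F$ gives the posterior $(\alpha+\delta_y)+\rho\delta_u+(r-\rho)F$---adding a point mass at a draw from $F$ is not the same as adding the measure $F$---so the second term becomes a \emph{different} convex function $h(\rho)=E_{y\sim F}\left[W((\alpha+\delta_y)+\rho\delta_u+(r-\rho)F,\alpha_2;\cdot)\right]$, and one needs convexity of $\rho\phi(\rho+1)+(r-\rho)h(\rho)$, where the cross term $\phi'(\rho+1)-h'(\rho)$ has no a priori sign. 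Second, even granting that convexity, your tangent-line argument terminates in the first-order condition $E_{X\sim F}\left[Dg_X(\delta_X-F)\right]\ge0$, which you explicitly leave unproved (``the crux''); but that sign essentially \emph{is} the theorem---it is not a formal consequence of Lemma~\ref{lem1} plus Theorem~\ref{thm1}, and nothing in the proposal indicates how to obtain it. The paper sidesteps both problems by proving Lemma~\ref{lem4} only for $F$ with finitely many support points, by induction on the number of support points, using explicit pairwise convex-order comparisons ($Z_2\le_{\rm cx}Z_1$ for Bernoulli mixtures) that invoke Lemma~\ref{lem1} only along atom-versus-atom directions; it then reaches bounded and general $F$ by two further coupling/approximation arguments (steps (ii) and (iii) of its proof)---an approximation stage your plan, which works with general $F$ from the outset, would also have to supply.
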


Lemma \ref{lemlam} and Theorem \ref{thm2} yield the following result concerning the break-even value $\Lambda(\alpha; A_n)$ for the one armed bandit $(\alpha, \delta_\lambda; A_n)$, as conjectured by Clayton and Berry (1985) in the case of uniform discounting. 

\begin{corollary}
\label{coro2}
For $0<M< \tilde{M}$ we have $\Lambda(MF; A_n)\geq \Lambda(\tilde{M} F; A_n)$, assuming $A_n$ is a regular discount sequence. 
\end{corollary}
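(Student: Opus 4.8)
The plan is to combine the break-even characterization of Lemma~\ref{lemlam} with the prior-weight monotonicity of the value established in Theorem~\ref{thm2}; the argument runs exactly parallel to the derivation of Corollary~\ref{coro1} from Theorem~\ref{thm1}, but with the inequalities reversed. Throughout I would write $S=\sum_{i=1}^n a_i$ and recall that $\Lambda(\,\cdot\,;A_n)$ refers to the one-armed bandit in which the known arm returns a constant payoff, i.e.\ $\alpha_2=\delta_\lambda$.

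First I would set $\lambda^*=\Lambda(MF;A_n)$. By Lemma~\ref{lemlam}, $\lambda^*$ is the smallest $\lambda$ with $W(MF,\delta_\lambda;A_n)\leq \lambda S$, and in particular the defining inequality holds at $\lambda=\lambda^*$ itself, so that
$$W(MF,\delta_{\lambda^*};A_n)\leq \lambda^* S.$$
That the infimal $\lambda$ actually lies in the set follows from the optimal stopping characterization recorded before Lemma~\ref{lemlam}: for $\lambda\geq \Lambda$ arm 2 is used throughout and $W(\alpha,\delta_\lambda;A_n)=\lambda S$, so the set $\{\lambda:W(\alpha,\delta_\lambda;A_n)\leq \lambda S\}$ is closed from below.

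Next I would apply Theorem~\ref{thm2} with the second arm taken to be the known arm $\alpha_2=\delta_{\lambda^*}$. Since $0<M<\tilde M$, the theorem gives $W(\tilde M F,\delta_{\lambda^*};A_n)\leq W(MF,\delta_{\lambda^*};A_n)$, and chaining this with the previous display yields $W(\tilde M F,\delta_{\lambda^*};A_n)\leq \lambda^* S$. Thus $\lambda^*$ belongs to the set $\{\lambda:W(\tilde M F,\delta_\lambda;A_n)\leq \lambda S\}$, and a second appeal to Lemma~\ref{lemlam}, which identifies $\Lambda(\tilde M F;A_n)$ as the smallest element of this set, gives $\Lambda(\tilde M F;A_n)\leq \lambda^*=\Lambda(MF;A_n)$, which is the claim.

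There is essentially no hard step here: once Theorem~\ref{thm2} is in hand, the corollary is a one-line consequence of the break-even characterization. The only point deserving care is the closedness remark above, ensuring that the governing inequality $W\leq \lambda S$ actually holds at the break-even value $\lambda^*$ rather than merely in its limit; this is immediate from the regularity of $A_n$ and the resulting optimal stopping structure. Regularity is in fact used only to guarantee that the break-even values $\Lambda$ are well defined in the first place.
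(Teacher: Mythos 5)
Your proof is correct and follows exactly the route the paper intends: Corollary~\ref{coro2} is stated as an immediate consequence of Lemma~\ref{lemlam} and Theorem~\ref{thm2}, which is precisely the combination you carry out. Your additional remark that the break-even value itself satisfies the inequality $W(MF,\delta_{\lambda^*};A_n)\leq \lambda^*\sum_{i=1}^n a_i$ (rather than only in the limit) is a careful filling-in of a detail the paper leaves implicit in Lemma~\ref{lemlam}'s ``smallest $\lambda$'' formulation.
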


When $F$ has only two support points, Corollary 2 says that for a Bernoulli one-armed bandit with a ${\rm Beta}(Mu, Mv)$ prior, $u, v>0$, for the unknown arm, the break-even value decreases in $M$.  This Bernoulli case was proved by Gittins and Wang (1992) for infinite-horizon geometric discounting. 

The rest of this section gives a proof of Theorem \ref{thm2}.  We assume $F$ has finite, and then bounded, and finally arbitrary, support.  The key step is summarized as Lemma \ref{lem4}.

\begin{lemma}
\label{lem4} 
Assume $n\geq 2,\ L>0$.  Assume $\alpha$ is a finite measure on $\mathbf{R}$ with a finite mean and $F$ is a probability distribution on $\mathbf{R}$ with $s<\infty$ support points.  Then $E[W(\alpha + \theta F + (L-\theta) \delta_X, \alpha_2; A^1_n)|F]$ decreases in $\theta \in [0, L]$. 
\end{lemma}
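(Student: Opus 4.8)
The plan is to prove the lemma by induction on the horizon, reducing the monotonicity in $\theta$ to a convexity property of the value $W$ in the arm-1 prior which extends Lemma~\ref{lem1} from a single two-point split to arbitrary total-mass-preserving mixtures. Write $F=\sum_{k=1}^s p_k\delta_{x_k}$ and set $\nu_k(\theta)=\alpha+\theta F+(L-\theta)\delta_{x_k}$, so that the quantity in question is $g(\theta)=\sum_{k=1}^s p_k W(\nu_k(\theta),\alpha_2;A_n^1)$, all the $\nu_k(\theta)$ sharing the total mass $M+L$. The combinatorial engine is the elementary identity that, for $0\le\theta_1<\theta_2\le L$, one may write $\nu_j(\theta_2)=\sum_k\lambda_{jk}\nu_k(\theta_1)$ with $\lambda_{jk}\ge 0$, $\sum_k\lambda_{jk}=1$, and, crucially, $\sum_j p_j\lambda_{jk}=p_k$; concretely $\lambda_{jk}=\beta p_k+(1-\beta)[j=k]$ with $\beta=(\theta_2-\theta_1)/(L-\theta_1)$, reflecting that the fully spread measure is the $p$-mixture $\alpha+LF=\sum_k p_k\nu_k(\theta_1)$.

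Granting the convexity statement $(\star)$: $W(\sum_k\lambda_k\mu_k,\alpha_2;B)\le\sum_k\lambda_k W(\mu_k,\alpha_2;B)$ whenever the finite measures $\mu_k$ share a common total mass and $\lambda$ is a probability vector, the lemma follows in one line. Applying $(\star)$ to $\nu_j(\theta_2)=\sum_k\lambda_{jk}\nu_k(\theta_1)$ and averaging against $p_j$ gives
\[
g(\theta_2)=\sum_j p_j W(\nu_j(\theta_2),\alpha_2;A_n^1)\le\sum_j p_j\sum_k\lambda_{jk}W(\nu_k(\theta_1),\alpha_2;A_n^1)=\sum_k p_k W(\nu_k(\theta_1),\alpha_2;A_n^1)=g(\theta_1),
\]
where the last equality is exactly $\sum_j p_j\lambda_{jk}=p_k$. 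So the heart of the matter is $(\star)$, a strengthening of Lemma~\ref{lem1}.

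I would prove $(\star)$ by induction on the horizon, in parallel with the proofs of Lemma~\ref{lem1} and Theorem~\ref{thm1}. The base case is transparent: on a fixed-total-mass set the arm-1 posterior mean of $\sum_k\lambda_k\mu_k$ equals $\sum_k\lambda_k m(\mu_k)$, an affine (indeed convex-combination) function of $\lambda$, so the one-stage value, a nonnegative multiple of $\max(m,\mu_2)$, is convex; this is the convex-order mechanism (spreading the added mass is a mean-preserving contraction of the arm-1 mean, and $W$ is convex). For the inductive step I would write $W=\max(W^1,W^2)$ via (\ref{Wdef1})--(\ref{Wdef3}); since a maximum of convex functions is convex, it suffices to treat $W^1$ and $W^2$ separately. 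The term $W^2$ is immediate from the induction hypothesis, the arm-2 update $\alpha_2+\delta_Y$ being independent of the arm-1 prior and the expectation over $Y$ preserving convexity. For $W^1$ the affine-mean part is harmless; the difficulty is concentrated in the exploration term $E[W(\sum_k\lambda_k\mu_k+\delta_{X'},\alpha_2;B')\mid\sum_k\lambda_k\mu_k]$, whose predictive law for the next observation $X'$ is the mixture $\sum_k\lambda_k\mu_k/(M+L)$ and hence itself depends on $\lambda$.

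The main obstacle is precisely this $\lambda$-dependence of the sampling law in the exploration term. Splitting $X'$ according to the component it is drawn from, the contribution from the fixed base $\alpha$ is convex by the induction hypothesis (its sampling measure does not involve $\lambda$), but the contributions in which $X'$ lands on the moving mass produce a bilinear term of the schematic form $\sum_k\lambda_k\,\Gamma_k(\lambda)$ with each $\Gamma_k$ convex by the induction hypothesis. As in Lemma~\ref{lem1}, a product of a linear weight and a convex function need not be convex, and one must exhibit a compensating cross-term: the one-dimensional identity $\psi''=2(\phi'(\rho+1)-\phi'(\rho))+\rho\phi''(\rho+1)+(r-\rho)\phi''(\rho)\ge0$ there must be replaced by a multivariate version in which the first-order cross-differences of the $\Gamma_k$, summed against the total-mass-preserving simplex directions, are shown to be nonnegative using convexity of the continuation value. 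Establishing this compensation --- equivalently, the sign of the relevant discrete mixed second difference of the continuation value along mass-preserving directions --- is the crux, and is where the special structure, that the pseudo-observations all sit at the support points of $F$ and that $X$ is itself drawn from $F$, must be exploited so that the offending cross-terms cancel after averaging.
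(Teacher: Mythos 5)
Your reduction of the lemma to the mixture-convexity statement $(\star)$ is correct and clean: the identity $\nu_j(\theta_2)=\sum_k\lambda_{jk}\nu_k(\theta_1)$ with $\lambda_{jk}=\beta p_k+(1-\beta)[j=k]$ and the balance condition $\sum_j p_j\lambda_{jk}=p_k$ check out, and granting $(\star)$ the monotonicity does follow in one line. The problem is that $(\star)$ is never proved, and you say so yourself: the entire difficulty of the lemma has been relocated into the unproven ``compensation'' of cross-terms in the exploration part of $W^1$. What Lemma \ref{lem1} provides is convexity of $W$ only along one-dimensional segments $\rho\mapsto\alpha+\rho\delta_u+(r-\rho)\delta_v$, i.e., along elementary directions $\delta_u-\delta_v$. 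Convexity along a spanning family of directions does not imply convexity on the affine set of measures of fixed total mass (compare $f(x,y)=xy$, convex along each coordinate direction but not convex), so $(\star)$ is genuinely stronger than Lemma \ref{lem1}. Concretely, the second derivative of the exploration term along a direction $\nu$ of total mass zero involves the quadratic form $\nu\mapsto\int D_\nu W(\mu+\delta_x)\,\nu(dx)$, and the available information (Lemma \ref{lem1}) gives its nonnegativity only at the special vectors $\nu=\delta_u-\delta_v$; nonnegativity of a quadratic form at a spanning set of vectors does not make it positive semidefinite. So the proposal has a genuine gap at exactly its load-bearing step: it is a correct reduction to a claim that is unestablished and quite possibly harder than (or even stronger than) the lemma itself.

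The paper's proof avoids ever needing more than one elementary direction at a time: it inducts on the number $s$ of support points of $F$, not on the horizon. For $s=2$ the claim is a one-dimensional convex-order comparison ($Z_2\leq_{\rm cx}Z_1$ for two-point random variables) combined with Lemma \ref{lem1}. For $s\geq 3$ it first applies the induction hypothesis to the leave-one-out distributions $F^k$, and then pairs the remaining terms as $p_jV_{jk}+p_kV_{kj}$; each pair is again a redistribution of mass between just two support points $x_j,x_k$, handled by a Bernoulli convex-order argument plus Lemma \ref{lem1}. If you want to rescue your approach you must either prove $(\star)$ outright (a substantial new result about the value function) or restructure the averaging so that consecutive comparisons only ever move mass along a single direction $\delta_{x_j}-\delta_{x_k}$ --- which is precisely what the paper's pairing argument accomplishes.
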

\begin{proof}
We use induction on $s$.  Although the induction may start at the trivial case $s=1$, we present the $s=2$ case to illustrate the convexity arguments.  Write $F=p \delta_1 + (1-p) \delta_0$ where $p\in (0, 1)$ and $\{0, 1\}$ are the support points without loss of generality.  For fixed $0 \leq \theta_1< \theta_2\leq L$, let $Z\sim {\rm Bernoulli}(p)$ and define 
$$Z_i=\theta_i p + (L-\theta_i)Z,\quad i=1,2.$$
Then $E Z_1 = E Z_2 = p L$, and it is easy to verify $Z_2\leq_{\rm cx} Z_1$ as $\theta_1< \theta_2$ (see, e.g., Shaked and Shanthikumar 2007, Theorem 3.A.18).  Let us define 
$$\phi(u) = W(\alpha + u\delta_1 +(L-u) \delta_0, \alpha_2; A^1_n).$$  
By direct calculation
\begin{align*}
E\left[W\left(\alpha + \theta_1 F +(L-\theta_1)\delta_X, \alpha_2; A^1_n\right)|F\right] = &p \phi(\theta_1 p +L-\theta_1) + (1-p) \phi( \theta_1 p)\\ 
=& E \phi(Z_1)\\
\geq & E \phi(Z_2)\\
=& E\left[ W\left(\alpha + \theta_2 F +(L-\theta_2)\delta_X, \alpha_2; A^1_n \right)|F\right]
\end{align*}
where the inequality holds because $Z_2\leq_{\rm cx} Z_1$ and, by Lemma \ref{lem1}, $\phi(u)$ is convex in $u\in [0, L]$.  

For $s\geq 3$, write $F=\sum_{j=1}^s p_j \delta_{x_j},$ where $\{x_j,\ j=1,\ldots, s\}$ are the support points, $p_j> 0$ and $\sum_{j=1}^s p_j =1$.  Consider the leave-one-out distributions 
\begin{align*}
F^k &=\sum_{j\neq k} \frac{p_j }{1-p_k} \delta_{x_j},\quad k=1,\ldots, s.
\end{align*}
Denote $W(\gamma) = W(\gamma, \alpha_2; A^1_n)$ for convenience.  For fixed $0 \leq \theta_1< \theta_2\leq L$, we have
\begin{align}
\nonumber
(s-1) &\left. E\left[W\left(\alpha  +\theta_1 F  + (L-\theta_1) \delta_X\right)\right|F\right]\\
\nonumber
& =\sum_{k=1}^s (1-p_k) E\left[W\left(\alpha + \theta_1 F + (L-\theta_1) \delta_X\right)\left|F^k \right]\right.\\
\nonumber
&=\sum_{k=1}^s (1-p_k) \left. E\left[W\left(\alpha + \theta_1 p_k \delta_{x_k}+ \theta_1 (1-p_k) F^k + (L-\theta_1) \delta_X\right)\right|F^k \right]\\
\label{induct1}
&\geq \sum_{k=1}^s (1-p_k)\left. E\left[W\left(\alpha + \theta_1 p_k \delta_{x_k} + \theta_2 (1-p_k) F^k + (L -\theta_2(1-p_k)-\theta_1 p_k) \delta_X\right)\right|F^k \right]\\
\nonumber
&= \sum_{k=1}^s \sum_{j\neq k} p_j V_{jk}, 
\end{align}
where 
\begin{align*}
V_{jk} &=W\left(\alpha +\theta_2 \gamma^{jk} + \theta_1 p_k \delta_{x_k} + (L -\theta_2(1-p_k - p_j)-\theta_1 p_k) \delta_{x_j}\right),\\
\gamma^{jk} &=\sum_{l\neq j, k} p_l\delta_l,\quad j\neq k.
\end{align*}
The inequality (\ref{induct1}) follows from the induction hypothesis; other steps are algebraic manipulations.

For fixed $j\neq k$, let $Z\sim {\rm Bernoulli} (p_k/(p_j + p_k))$ and define 
\begin{align*}
Z_1 &= \theta_1 p_k + Z(L-\theta_2+(\theta_2-\theta_1)(p_j+p_k));\\
Z_2 &=\theta_2 p_k + Z(L-\theta_2). 
\end{align*}
It is easy to verify that
$$EZ_1=EZ_2;\quad Z_2\leq_{\rm cx} Z_1.$$ 
We have
\begin{align*}
p_j V_{jk} + p_k V_{kj} &= (p_j+p_k) EW\left(\alpha +\theta_2 \gamma^{jk} + Z_1 \delta_{x_k} + (L-\theta_2(1-p_k-p_j) -Z_1)\delta_{x_j}\right)\\
&\geq (p_j+p_k) EW\left(\alpha +\theta_2 \gamma^{jk} + Z_2 \delta_{x_k} + (L-\theta_2(1-p_k-p_j) -Z_2)\delta_{x_j}\right)\\
&= p_j W\left(\alpha +\theta_2 F + (L-\theta_2) \delta_{x_j}\right) + p_k W\left(\alpha + \theta_2 F + (L -\theta_2) \delta_{x_k}\right),
\end{align*}
where the inequality holds by Lemma \ref{lem1} as $Z_2\leq_{\rm cx} Z_1$.  Hence, 
\begin{align*}
\sum_{k=1}^s \sum_{j\neq k} p_j V_{jk} &= \sum_{1\leq j<k\leq s} (p_j V_{jk} + p_k V_{kj})\\
&\geq \sum_{1\leq j<k\leq s} \left[p_j W\left(\alpha +\theta_2 F + (L-\theta_2) \delta_{x_j}\right) + p_k W\left(\alpha + \theta_2 F + (L -\theta_2) \delta_{x_k}\right) \right]\\
&=(s-1)\sum_{j=1}^s p_j W\left(\alpha +\theta_2 F + (L-\theta_2) \delta_{x_j}\right)\\
&=(s-1) E[W(\alpha   +\theta_2 F  + (L-\theta_2) \delta_X)|F]. 
\end{align*} 
Thus we have shown that $E[W(\alpha   +\theta F  + (L-\theta) \delta_X)|F]$ decreases in $\theta\in [0, L]$. 
\end{proof}

\begin{proof}[Proof of Theorem \ref{thm2}]
(i) Assume $F$ has finite support.  The claim obviously holds for $n=1$.  For $n\geq 2$ we use induction.  In view of (\ref{Wdef1})--(\ref{Wdef3}), we only need to show 
\begin{align}
\label{wt1}
E\left[W(M F +\delta_X, \alpha_2; A^1_n)|F\right] &\geq E\left[W(\tilde{M} F +\delta_X, \alpha_2; A^1_n)|F\right]\quad {\rm and}\\
\label{wt2}
E\left[W(M F, \alpha_2 + \delta_Y; A^1_n)|\alpha_2\right] &\geq E\left[W(\tilde{M} F, \alpha_2 +\delta_Y; A^1_n)|\alpha_2\right].
\end{align}
By the induction hypothesis, (\ref{wt2}) holds.  Define $\eta=(\tilde{M}+1)/(M+1)$ and $\theta=\tilde{M}/\eta$.  Noting $M< \theta< M+1$, we may apply Lemma \ref{lem4} and get 
\begin{align}
\nonumber
E\left[W(M F +\delta_X, \alpha_2; A^1_n)|F\right] \geq & E\left[W(\theta F + (M+1-\theta)\delta_X, \alpha_2; A^1_n)|F\right] \\
\label{strict}
\geq & E\left[W(\eta (\theta F + (M+1-\theta)\delta_X), \alpha_2; A^1_n)|F\right]\\
\nonumber
= & E\left[W(\tilde{M} F +\delta_X, \alpha_2; A^1_n)|F\right],
\end{align}
where (\ref{strict}) holds by the induction hypothesis, as $\eta> 1$.  Thus (\ref{wt1}) holds as required.

(ii) Assume $F$ has bounded support.  Then for arbitrary $\epsilon>0$ we can construct two distributions $F^*$ and $F_*$ supported on $\{x_1,\ldots, x_s\}$ and $\{x_0, \ldots, x_{s-1}\}$ respectively, where $x_j=x_0+j\epsilon,$ such that $F(x_0)=0,\ F(x_s)=1$ and $F_*(x_j)=F^*(x_{j-1})=F(x_j),\ j=1,\ldots, s$.  By construction, $F_*\leq_{\rm st} F\leq_{\rm st} F^*$.  Theorem \ref{thm1} yields 
$$W(MF_*, \alpha_2; A_n)\leq W(MF, \alpha_2; A_n)\leq W(MF^*, \alpha_2; A_n).$$
Note that if $X\sim F^*$ then $X-\epsilon\sim F_*$.  Therefore the bandits $(MF^*, \alpha_2; A_n)$ and $(MF_*, \alpha_2; A_n)$ can be coupled in an obvious way such that, for every strategy of $(MF^*, \alpha_2; A_n)$, there exists a strategy of $(MF_*, \alpha_2; A_n)$ under which the payoff at each stage is either the same (when arm 2 is selected), or exactly $\epsilon$ less (when arm 1 is selected).  Thus we have shown 
$$W(MF^*, \alpha_2; A_n) - W(MF_*, \alpha_2; A_n)\leq \epsilon \sum_{i=1}^n a_i.$$
Hence $W(MF^*, \alpha_2; A_n) \to W(MF, \alpha_2; A_n)$ as $\epsilon\to 0,$ and the monotonicity of $W(MF^*, \alpha_2; A_n)$ with respect to $M$ implies the corresponding monotonicity of $W(MF, \alpha_2; A_n)$. 

(iii) Finally, assume $F$ is an arbitrary distribution with a finite mean.  Suppose $X\sim F$.  For $L>0$ let $F^*$ be the distribution of $X^*$, defined as $X$ if $|X|\leq L$ and $0$ otherwise.  We construct a coupling between $(MF, \alpha_2; A_n)$ and $(MF^*, \alpha_2; A_n)$.  Let $X_k$ be the resulting observation when arm 1 of $(MF, \alpha_2; A_n)$ is pulled for the $k$th time.  If $|X_1|\leq L$ then let $X_1^*=X_1$, otherwise $X_1^*=0$, yielding $X_1^*\sim F^*$.  For general $k\geq 1$, if $|X_i|\leq L,\ i=1,\ldots, k,$ then let $X_{k+1}^*=X_{k+1}$ if $|X_{k+1}|\leq L$ and $X_{k+1}^*=0$ otherwise.  In this case the conditional distribution of $X_{k+1}$ given $X_i,\ i=1,\ldots, k,$ is $(MF+\sum_{i=1}^k \delta_{X_i})/(M+k)$.  Since $|X_i|\leq L,\ i=1,\ldots, k,$ we have $X_i^*=X_i,\ i=1,\ldots, k$, and the conditional distribution of $X_{k+1}^*$ given $X_i^*,\ i=1,\ldots, k,$ is precisely $(MF^*+\sum_{i=1}^k \delta_{X_i^*})/(M+k)$.  That is, $X_i^*,\ i=1,\ldots, k+1,$ can be regarded as successive pulls from arm 1 of $(MF^*, \alpha_2; A_n)$ as long as $|X_i|\leq L,\ i=1,\ldots, k$.  Let the $k$th pull from arm 2 be $Y_k$ for both bandits.  In the event that all $|X_i|\leq L,\ i=1,\ldots, n$, the optimal strategy for $(MF, \alpha_2; A_2)$ can be adopted for $(MF^*, \alpha_2; A_2)$ throughout, yielding identical pulls (not all $X_i,\ i=1,\ldots,n,$ are realized).  By considering a trivial upper (respectively, lower) bound for the payoff of $(MF, \alpha_2; A_2)$ (respectively, $(MF^*, \alpha_2; A_2)$) when at least one $|X_i|>L$, we have 
\begin{align*}
W(MF, \alpha_2; A_2) - W(MF^*, \alpha_2; A_2) &\leq E\left[1_{\cup_{i=1}^n \{|X_i|>L\}} \sum_{i=1}^n \left(a_i(|Y_i| +|X_i|) - a_i(-|Y_i| -L)\right)\right]\\
&\leq E\left[1_{\cup_{i=1}^n \{|X_i|>L\}} \sum_{i=1}^n a^*(2|Y_i| +|X_i|+L)\right]\\
&\leq E\left[\left(\sum_{i=1}^n 1_{\{|X_i|>L\}}\right) \sum_{i=1}^n a^*(2|Y_i| +|X_i|+L)\right]\\
&\equiv a^* h(L), 
\end{align*}
where $a^*\equiv \max_{i=1}^n a_i$.  Direct calculation using exchangeability yields 
\begin{align*}
h(L)=n^2 \Pr(|X_1|>L) (2E|Y_1| + L) + n E\left[1_{|X_1|>L} |X_1|\right] + n(n-1) E \left[1_{|X_1|>L} |X_2|\right]
\end{align*}
The first two terms tend to zero as $L\to\infty$ by dominated convergence since $E|X_1|<\infty$.  For the last term, by conditioning on $X_1$ we have
$$E \left[1_{|X_1|>L} |X_2|\right] = E \left[1_{|X_1|>L} \left(\frac{M}{M+1} E|X| + \frac{1}{M+1} |X_1|\right)\right],$$
which also vanishes as $L\to\infty$.  Thus 
$$\limsup_{L\to\infty} \left[W(MF, \alpha_2; A_2) - W(MF^*, \alpha_2; A_2)\right]\leq  0.$$
By a parallel argument, we get $\liminf_{L\to\infty} \left[W(MF, \alpha_2; A_2) - W(MF^*, \alpha_2; A_2)\right]\geq  0.$  Thus $W(MF^*, \alpha_2; A_2)$ tends to $W(MF, \alpha_2; A_2)$ as $L\to\infty$, and the monotonicity of $W(MF, \alpha_2; A_n)$ with respect to $M$ is proved as before. 
\end{proof}

{\bf Remark 2.} Clayton and Berry (1985) also conjecture that the monotonicity in Corollary \ref{thm2} is strict if $n\geq 2,\ A_n=(1,1, \ldots, 1)$, and $F$ is nondegenerate.  This can be confirmed by a careful analysis of the above results.  Some modifications are needed.  Using arguments similar to steps (ii) and (iii) in the proof of Theorem \ref{thm2}, we can first establish that Lemma \ref{lem4} holds without the finite support restriction.  Directly applying this strengthened Lemma \ref{lem4} shows that (\ref{mono2}) holds with strict inequality assuming $n\geq 2,\ A_n=(1,1, \ldots, 1),\ F$ is nondegenerate, and arm 1 is optimal initially in $(\tilde{M}F, \alpha_2; A_n)$.  Under such conditions, the strictness of the inequality holds by induction as one key step (\ref{strict}) holds with strict inequality.  It follows that Corollary \ref{coro2} can be strengthened to strict monotonicity assuming uniform discounting, $n\geq 2,$ and a nondegenerate $F$. 


\end{document}